\documentclass[journal]{IEEEtran}

\usepackage{amsmath, amssymb, amsthm}
\usepackage{mathtools}
\usepackage{mathrsfs}
\usepackage{bm}
\usepackage{cite}
\usepackage{balance}
\usepackage{graphicx}
\usepackage{xcolor}
\usepackage[hidelinks]{hyperref}
\newtheorem{theorem}{Theorem}

\newtheorem{assumption}[theorem]{Assumption}

\theoremstyle{definition}
\newtheorem{definition}[theorem]{Definition}

\theoremstyle{problem}
\newtheorem{problem}[theorem]{Problem}

\theoremstyle{remark}

\title{Noise-Induced Navigation in Non-convex Domains \\
	and Compact Manifolds}

\author{Karthik Elamvazhuthi
	\thanks{Karthik Elamvazhuthi is an independent researcher (e-mail: karthikevaz@gmail.com)}}
\begin{document}
	\maketitle
	
	\begin{abstract}
		In this note, we study the problem of designing a feedback law that globally steers a system to a prescribed target configuration. Even if the system is fully actuated, topological obstructions generally prevent the existence of globally asymptotically stabilizing continuous feedback laws. We revisit this problem in a stochastic setting by allowing noise to enter through the control channels. Using a criterion for asymptotic stability in the large that combines local Lyapunov stability with positive recurrence, we constructively show that one can construct elementary feedback laws that achieve global asymptotic stability in the large, of the target equilibrium point in connected Euclidean domains with obstacles and manifolds without boundary. For Euclidean domains with obstacles, we also show that the method extends naturally
		to the problem of finding the minimizer of a strongly convex function with non-convex constraints. Numerical
		experiments illustrate the effectiveness of the approach for Euclidean domains with circular obstacles and the two dimensional sphere. Additionally, we study the role noise strength when there is a non-convex obstacle, in which case the system might show metastability.
	\end{abstract}
	
	\section{Introduction}

	A central objective in motion planning is the design of a feedback law that drives a system from any initial configuration to a desired goal $x_{eq}$, with global guarantees, while remaining in a prescribed compact domain. A number of algorithms have been developed to address this problem in robotics. However, even if the system is fully actuated, it is well known that the topology of the domain presents
	an obstruction to global asymptotic stability. On a compact domain with obstacles, no smooth
	feedback law can globally stabilize a target equilibrium~\cite{koditschek1990robot},
	and an analogous obstruction holds on compact manifolds ~\cite{bhat2000topological}. A number of approaches have been considered in literature to avoid such topological obstructions. For example, using discontinuous feedback laws \cite{malisoff2006global} combined with notion of sample and hold solutions, or using hybrid dynamics \cite{ochoa2025robust}.

	In this paper, we consider an approach that provides an alternative perspective to avoid these geometric obstructions by introducing noise into the closed-loop dynamics. It formalizes the intuition that one can use noise to help the system escape undesirable critical points. Rather than seeking a deterministic vector field, we design a controlled diffusion whose sample paths converge to $x_{eq}$ in a probabilistic sense. We enforce invariance of the domain by formulating the dynamics as a reflected stochastic differential equation \cite{pilipenko2014introduction}. Within this framework, we show that for any connected open subset of Euclidean space with smooth boundary, a prescribed target equilibrium can be smoothly stabilized, provided one adopts the stochastic notion of {\it asymptotic stability in the large}. We further show that the same construction extends to smooth stabilization on a compact Riemannian manifold. The idea of using noise to circumvent topological obstructions has been considered in \cite{nishimura2016stabilization} under much more general context than we study, of stabilizing underactuated systems. However, the solvability of the stabilization problem is contingent on being able to find a Lyapunov function. In the setting closer to what we consider is \cite{paternain2019stochastic}, where discrete-time stochastic gradient descent is used to find the minimizer of a strongly convex function, navigating around convex obstacles. In contrast to our approach, this work requires construction of a navigation function. 
	
	The construction of our feedback law is based on a criterion for stability due to  \cite{khasminskii2011stochastic}, where the author establishes a sufficient condition for asymptotic stability in the large in the form of a local Lyapunov condition near the equilibrium, combined with positive recurrence of a neighborhood of the equilibrium. The choice of noise in our construction is done in the following manner: i) Near $x_{eq}$, the deterministic stabilizing dynamics dominates the behavior so that a quadratic Lyapunov function is locally non-increasing along the generator, yielding local stability in probability, ii)  Away from $x_{eq}$, the noise renders the generator uniformly elliptic, which in turn yields the recurrence needed to drive the process back into a neighborhood of the goal.

	\section{Euclidean Domains with Boundary}
	First, we consider the case when the domain $\Gamma$ is a bounded open subset of $\mathbb{R}^N$ with smooth 
	boundary. Consider the reflected diffusion process \cite{pilipenko2014introduction}
	
	Let $\Gamma \subset \mathbb{R}^d$ be a bounded connected open set with smooth
	boundary, and let $n(x)$ denote the inward unit normal vector at
	$x \in \partial \Gamma$. Let
	$(\Omega,\mathcal{F},\{\mathcal{F}_t\}_{t\ge 0},\mathbb{P})$
	be a filtered probability space satisfying the usual conditions, and let
	$ W(t) = \bigl(W^1(t),\ldots,W^d(t)\bigr) $
	be a standard $\mathbb{R}^d$-valued $\{\mathcal{F}_t\}$-Brownian motion.
	Given smooth coefficient functions
	$
	u_i,a_i \in C^\infty(\overline{\Gamma}), ~ i=1,\ldots,d,
	$
	we consider the reflected Stratonovich SDE \cite{lions1984stochastic, ikeda2014stochastic, pilipenko2014introduction}
	\begin{equation}\label{eq:sde_eucl}
		\begin{aligned}
			dX(t) &= \sum_{i=1}^d u_i(X(t))A_i\,dt+\sum_{i=1}^d a_i(X(t))A_i\circ dW^i(t)\\
			&\quad+n(X(t))\,d\varphi(t).
		\end{aligned}
	\end{equation}
	with initial condition $X(0)=X_0 \in \Gamma$. Here
	$ A_i=\partial_{x_i}, ~ i=1,\ldots,d,
	$
	are the standard coordinate vector fields on $\mathbb{R}^d$. The process
	$X(t)$ takes values in $\overline{\Gamma}$, and $\varphi(t)$ is a continuous,
	nondecreasing, adapted process with $\varphi(0)=0$ that increases only when
	$X(t)$ lies on the boundary, in the sense that $
	\int_0^t \mathbf{1}_{\Gamma}(X(s))\,d\varphi(s)=0,
	~ t\ge 0.$
	Thus the term $n(X(t))\,d\varphi(t)$ enforces reflection at $\partial\Gamma$
	and keeps the trajectory inside $\overline{\Gamma}$. Under the hypothesis existence and uniqueness of solutions is well known \cite{lions1984stochastic, ikeda2014stochastic, pilipenko2014introduction}. 
	
	Before we state our objective, we recall a relevant notion of stability
	from~\cite{khasminskii2011stochastic}.
	
	\begin{definition}[Stability in probability]
		A solution $X(t) \equiv 0$ of~\eqref{eq:sde_eucl} is said to be \emph{stable in probability} for $t \geq 0$ if for any $s \geq 0$ and $\varepsilon > 0$,
		\[
		\lim_{x \to 0} \mathbb{P}\left\{ \sup_{t > s} |X^{s,x}(t)| > \varepsilon \right\} = 0.
		\]
	\end{definition}
	
	\begin{definition}[Asymptotic stability in the large]
		The solution $X(t) \equiv 0$ of~\eqref{eq:sde_eucl} is said to be (asymptotically) stable in the large if it is stable in probability and, in addition, for all $s$ and $x$,
		\[
		\mathbb{P}\left\{ \lim_{t \to \infty} X^{s,x}(t) = 0 \right\} = 1.
		\]
	\end{definition}

	Given these definitions, our objective is the following.
	
	\medskip
	
	\begin{problem}Find $u_i(x)$ and $a_i(x)$ such that 
		a given point $x_{eq} \in \Gamma$ is asymptotically stable in 
		the large, for the process \eqref{eq:sde_eucl}.
	\end{problem}

	\section{Analysis}
	The following theorem addresses the problem defined in the previous section.
	
	\begin{theorem}
		There exist smooth functions $a_i, u_i \in C^\infty(\overline{\Gamma})$ 
		such that the equilibrium point $x_{eq}$ of the process 
		\eqref{eq:sde_eucl} is asymptotically stable in probability in 
		the large. Moreover, a choice of the controls is 
		\begin{align}
			&a_i(x) = |x-x_{\rm eq}|^2, \quad u_i(x) = -\alpha (x_i-x_{\rm eq,i}), 
			\qquad i = 1,\ldots,d.
		\end{align}
		for a suitable $\alpha>0$.
	\end{theorem}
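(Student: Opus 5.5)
We will apply Khasminskii's criterion for asymptotic stability in the large, recalled in the Introduction. It has two ingredients: a local Lyapunov function near the equilibrium giving stability in probability, and positive recurrence of every small neighborhood of $x_{\rm eq}$ from any initial condition in $\overline{\Gamma}$. Translate coordinates so that $x_{\rm eq}=0$, and fix $r>0$ with $\overline{B_{2r}(0)}\subset\Gamma$. Near $0$ the reflection term is then inactive, and the process is an ordinary diffusion there.

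\textbf{Step 1 (Itô form and the generator).} Each $a_i=|x|^2$ multiplies only the coordinate field $A_i$. The Stratonovich-to-Itô correction is therefore $\tfrac12\sum_i a_i\,\partial_{x_i}a_i\,A_i=\sum_i |x|^2 x_i A_i$. The generator in $\Gamma$ is
\[
Lf(x)=\sum_i\bigl(-\alpha x_i+|x|^2x_i\bigr)\partial_{x_i}f+\tfrac12|x|^4\Delta f ,
\]
with the Neumann condition $\partial_n f=0$ on $\partial\Gamma$.

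\textbf{Step 2 (local stability).} Take $V(x)=|x|^2$. Then
\[
LV=-2\alpha|x|^2+2|x|^4+d|x|^4\le -\alpha|x|^2 \quad\text{for } |x|^2\le \alpha/(2+d).
\]
Choose $\alpha$ so that $\alpha/(2+d)\ge(2r)^2$, or simply shrink $r$. Then $V$ is a strict Lyapunov function on $B_{2r}(0)$, since $LV\le -cV$ there. Because the diffusion coefficient vanishes at $0$, Khasminskii's local theorem (applied up to the exit time of $B_{2r}$) gives stability in probability. It also gives that, started in $B_r$, the process converges to $0$ with probability close to $1$ as the start approaches $0$. Concretely, $V(X(t\wedge\tau))$ is a nonnegative supermartingale, and the bound $LV\le -cV$ forces convergence to $0$ on the event that the process never leaves $B_{2r}$.

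\textbf{Step 3 (recurrence).} On $K=\overline{\Gamma}\setminus B_{r/2}(0)$ the diffusion matrix $|x|^4 I$ is uniformly elliptic, with $|x|^4\ge (r/2)^4$, and all coefficients are smooth on the compact set $\overline{\Gamma}$. Hence the reflected diffusion on the bounded connected domain hits $B_{r/2}$ from any initial condition, with expected hitting time bounded uniformly in $x\in K$. To prove this, solve the mixed problem
\[
Lw=-1 \text{ in } \Gamma\setminus\overline{B_{r/2}},\qquad w=0 \text{ on } \partial B_{r/2},\qquad \partial_n w=0 \text{ on } \partial\Gamma .
\]
Uniform ellipticity and smoothness give a bounded classical solution $w\ge0$. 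Itô's formula for the reflected process (the $d\varphi$ term drops out by the Neumann condition) then yields $\mathbb{E}\tau\le \sup w<\infty$. Connectedness of $\Gamma$ is used in solving this problem, via the maximum principle and Fredholm alternative.

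\textbf{Step 4 (combine).} Stability in probability plus finite hitting times of arbitrarily small neighborhoods is exactly Khasminskii's sufficient condition. We can also argue directly. Each time the process enters $B_{\delta}$ with $\delta$ small, it stays in $B_{2r}$ forever, and hence converges to $0$, with probability at least $1/2$. By the strong Markov property and the uniform recurrence from $\partial B_{2r}$ back to $B_\delta$, the number of failed attempts is geometrically distributed. So convergence occurs almost surely. Passing to the limit requires only that $L$ be uniformly elliptic outside every ball around $0$, which holds.

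The main obstacle is Step 3. The key difficulty is justifying the PDE solvability and Itô's formula with the reflection term for the mixed Dirichlet–Neumann problem. An alternative is to cite the standard result that a nondegenerate reflected diffusion in a bounded smooth domain is positive recurrent (for example via a Harnack or support-theorem argument giving a uniform positive probability of hitting $B_{r/2}$ within time $1$), which we would try first.
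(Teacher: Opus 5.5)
Your proposal is correct and follows essentially the same route as the paper. You use the same quadratic Lyapunov function (yours is twice the paper's $V$, giving $LV=-2\alpha|x|^2+(d+2)|x|^4$), Khasminskii's local stability theorem near $x_{\rm eq}$, positive recurrence via the mixed problem $Lw=-1$ on $\Gamma\setminus\overline{B_{r/2}}$ with $w=0$ on $\partial B_{r/2}$ and $\partial_n w=0$ on $\partial\Gamma$, and Khasminskii's recurrence criterion to conclude. Two of your additions are useful. Localizing to $\overline{B_{2r}}\subset\Gamma$, where the reflection is inactive, justifies applying the unreflected theory near $x_{\rm eq}$. Your strong-Markov geometric-trials argument in Step 4 spells out what the paper covers only by remarking that Khasminskii's proof of Theorem~5.7 uses just the strong Markov property.
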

	
	\begin{proof}
		Without loss of generality take $x_{eq} = 0$. Define $
		V(x) = \frac{1}{2}\sum_{i=1}^d x_i^2$.
		The parameter $\alpha > 1$ in the control laws is to be determined. The infinitesimal generator 
		of \eqref{eq:sde_eucl} is locally
		\begin{align*}
			LV(x)&=\sum_{i=1}^N u_i(x)\,\partial_{x_i}V
			+\frac12\sum_{i=1}^N a_i^2\,\partial_{x_i}^2V\\
			&\quad+\frac12\sum_{i=1}^N a_i(\partial_{x_i}a_i)(\partial_{x_i}V).
		\end{align*}
		Since $A_i = \partial_{x_i}$ and $a_i = |x|^2$ for $i = 1,\ldots,d$, 
		each term computes as:
		For $i=1,\ldots,d$,
		\[
		\partial_{x_i}V=x_i,\qquad
		\partial_{x_i}a_i=2x_i,\qquad
		\partial_{x_i}^2V=1.
		\]
		Substituting gives
		\begin{align*}
			LV
			&= \sum_{i=1}^d (-\alpha x_i)(x_i)
			+ \frac{1}{2}\sum_{i=1}^d 
			\left[|x|^4 \cdot 1 + |x|^2(2x_i)(x_i)\right] \\
			&= -\alpha\sum_{i=1}^d x_i^2 + \frac{d}{2}|x|^4 + |x|^2\sum_{i=1}^d x_i^2 \\
			&= -\alpha |x|^2 + \frac{d+2}{2}|x|^4 \\
			&= -2\alpha V + 2(d+2)V^2.
		\end{align*}
		Therefore, if $U$ is chosen sufficiently small so that
		\[
		V(x) \le \frac{\alpha}{2(d+2)}
		\qquad \text{for all } x \in U,
		\]
		then on $U\setminus\{x_{eq}\}$ we have
		\[
		LV \le -\alpha V < 0.
		\]
		By~\cite[Theorem~5.3]{khasminskii2011stochastic}, this implies that $x_{eq}$
		is stable in probability. Since $V$ has an infinitesimal upper limit,
		stability in probability is in fact uniform
		(\cite[Remark~5.3]{khasminskii2011stochastic}).
		
		Next, we prove recurrence. Let $B_r = B_r(x_{eq})$ be a closed ball with $\overline{B_r} \subset U$, 
		and set $D_r = \Gamma \setminus B_r$. The second order part of $L$ is 
		\[
		\frac{1}{2}\sum_{i=1}^d a_i^2(x)\,(\xi^i)^2
		= \frac{1}{2}|x|^4 \sum_{i=1}^d (\xi^i)^2.
		\]
		By construction, $a_i(x) = |x|^2 > 0$ on all of $D_r$ for $i = 1,\ldots,d$. 
		Since $\overline{D_r}$ is compact and each $a_i$ is continuous, $
		\inf_{x \in D_r} a_i(x) \geq \lambda > 0$
		for some constant $\lambda > 0$. Therefore,
		\[
		\frac{1}{2}\sum_{i=1}^d a_i^2(x)\,(\xi^i)^2
		\geq \frac{\lambda^2}{2}\sum_{i=1}^d (\xi^i)^2
		= \frac{\lambda^2}{2}\,|\xi|^2,
		\]
		so $L$ is uniformly elliptic on $D_r$. Consider the mixed Dirichlet--Neumann boundary value problem
		\[
		\begin{cases}
			Lu_r(x) = -1,              & x \in D_r, \\[0.4em]
			u_r(x) = 0,                & x \in \partial B_r, \\[0.4em]
			\dfrac{\partial u_r}{\partial n}(x) = 0, & x \in \partial\Gamma.
		\end{cases}
		\]
		
		Then from classical regularity theory  for mixed elliptic boundary value problems, there exists a unique weak solution \(u_r\) to this problem. See \cite[Section 9.6]{lions2012non}. Moreover, since the Dirichlet and Neumann parts of the boundary are separated by a non-zero distance, standard boundary regularity theory applies locally near each boundary point after flattening the boundary. In particular, combining the Euclidean half-space regularity results for the pure Dirichlet and pure Neumann problems with a  standard localization argument, one concludes that \(u_r\in C^\infty(\overline{D_r})\). See \cite{gilbarg1983trudinger,agranovich2015sobolev}.
		
		From \cite[Theorem 3.1.3]{pilipenko2014introduction}, $u_r(x) = \mathbb{E}_x[\tau_r]$ 
		is the expected first hitting time of $B_r$. Combining uniform stability in probability with the positive recurrence of
		$B_r$ established above, \cite[Theorem~5.7]{khasminskii2011stochastic} yields
		that $x_{eq}$ is asymptotically stable in probability in the large. We note that Khasminskii's results are stated for diffusions on $\mathbb{R}^\ell$
		without boundary, whereas our process is a reflected diffusion on $\Gamma$.
		However, the proof of \cite[Theorem~5.7]{khasminskii2011stochastic} relies only on the strong Markov property.
		For reflected diffusion~\eqref{eq:sde_eucl}, it is the unique solution of the
		submartingale problem, which yields the strong Markov property
		(see~\cite{pilipenko2014introduction}).
	\end{proof}

	\paragraph{Extension to general potential functions} The same construction as in the proof admits a generalization using an arbitrary
	smooth potential function. Let
	$V \in C^\infty(\bar{\Gamma})$ satisfy
	\[
	V(x_{eq})=0, \qquad V(x)>0
	\quad\text{for }x\in\overline{\Gamma}\setminus\{x_{eq}\},
	\]
	and assume that Hessian of $V$ at $x_{eq}$is positive definite. Then, after
	translating coordinates so that $x_{eq}=0$, one may choose
	\[
	a_i(x)=2V(x), \qquad
	u_i(x)=-\alpha\,\partial_{x_i}V(x),
	\qquad i=1,\dots,d,
	\]
	for all $x\in \Gamma$.

	For this choice,
	\[
	LV
	=
	\sum_{i=1}^d u_i\,\partial_{x_i}V
	+
	\frac12\sum_{i=1}^d
	\Bigl[
	a_i^2\,\partial_{x_i}^2V
	+
	a_i(\partial_{x_i}a_i)(\partial_{x_i}V)
	\Bigr].
	\]
	Since $a_i=2V$ and $\partial_{x_i}a_i = 2\,\partial_{x_i}V$, we obtain
	\begin{align*}
		LV&=-\alpha\sum_{i=1}^d(\partial_{x_i}V)^2
		+\frac12\sum_{i=1}^d(2V)^2\partial_{x_i}^2V\\
		&\quad+\frac12\sum_{i=1}^d(2V)(2\partial_{x_i}V)(\partial_{x_i}V)\\
		&=-\alpha|\nabla V|^2+2V|\nabla V|^2+2V^2\Delta V.
	\end{align*}
	Because at $x_{eq}$ the function $V$ has a positive definite Hessian, there exists a smaller
	neighborhood $U$ and constants $c,C>0$ such that
	\[
	c\,V(x) \le |\nabla V(x)|^2 \le C\,V(x),
	\qquad
	|\Delta V(x)| \le C,
	\qquad x \in U,
	\]
	where the lower bound is known as the Polyak-Lojasiewicz (PL) inequality. 
	Therefore,
	\[
	LV \le -\alpha c\,V + C_1 V^2
	\]
	for some constant $C_1>0$. After shrinking $U$ if necessary, it follows that
	\[
	LV \le -c_0 V < 0
	\qquad \text{on } U\setminus\{x_{eq}\},
	\]
	for some $c_0>0$, using the fact that $V$ is locally bounded. Thus the same local stability argument applies. The
	recurrence argument is unchanged, since $a_i=2V$ is strictly positive and
	therefore, by compactness, bounded away from zero on
	$\overline{\Gamma\setminus B_r(x_{eq})}$ for any
	closed ball $\overline{B_r(x_{eq})}\subset U$.
	
	This generalization will be the basis of our extension of the result to
	the case of compact manifolds.
	
	\paragraph{Finding minimizer of a strongly convex function.}
	Before we proceed to the next section, we consider the case where the objective is not merely to
	stabilize a prescribed point, but rather to find the minimizer of a given strongly convex function. This is the setting considered in \cite{paternain2019stochastic}. The discussion in the previous section is not directly applicable since it requires that the potential, and hence the noise, vanishes at the target equilibrium. If we don't know where the minimizer of a potential is, this is problematic. 
	
	To address this more general situation, instead of scaling the noise by the value of the Lyapunov function itself, we can scale it by
	$|\nabla V(x)|^2$. Since $V$ is strongly convex, its gradient vanishes only at the unique minimizer.
	Consequently, the diffusion degenerates at the minimizer, so that the minimizer remains an
	equilibrium, while away from the minimizer the diffusion is nondegenerate. This gives the same
	combination as before: local Lyapunov stability near the minimizer and recurrence from the rest
	of the domain.
	
	The idea is to choose the deterministic part of the dynamics as a gradient vector field,
	while choosing the noise amplitude to vanish at the minimizer. In particular, we take
	\[
	u_i(x)=-\alpha\partial_{x_i}V(x),\qquad
	a_i(x)=\sigma|\nabla V(x)|^2,
	\]
	for $i=1,\ldots,d$.
	Since \(x_\ast\) is an interior minimizer, \(\nabla V(x_\ast)=0\), and hence both the drift and
	diffusion vanish at \(x_\ast\). To check the Lyapunov estimate near \(x_\ast\), define 
	\[
	W(x):=V(x)-V(x_\ast).
	\]
	Then \(W(x_\ast)=0\), \(\nabla W=\nabla V\), and \(LW=LV\). The generator applied to \(W\) gives
	\begin{align*}
		LW&=-\alpha|\nabla V|^2+\frac{\sigma^2}{2}|\nabla V|^4\Delta V\\
		&\quad+\sigma^2|\nabla W|^2\nabla W^\top\nabla^2W\nabla W.
	\end{align*}
	The first term is the ``good term''. Since \(V\) is strongly convex near \(x_\ast\), there is a
	constant \(c>0\) such that
	\[
	|\nabla W(x)|^2 \geq cW(x)
	\]
	in a sufficiently small neighborhood of \(x_\ast\) with smooth boundary. Moreover, since \(W\) is smooth, \(\Delta W\)
	and \(\nabla^2W\) are locally bounded. Therefore, for \(x\) near \(x_\ast\),
	\[
	LW
	\leq
	-\alpha |\nabla V|^2
	+
	C\sigma^2 |\nabla V|^4 .
	\]
	Shrinking the neighborhood if necessary, the higher-order term is dominated by the negative
	gradient term, and we obtain
	\[
	LW
	\leq
	-\frac{\alpha}{2}|\nabla W|^2
	\leq
	-c_0 W
	\]
	for some \(c_0>0\). Thus \(W=V-V(x_\ast)\) satisfies the same local Lyapunov inequality near the
	minimizer as in the stabilization argument. Away from the minimizer, \(|\nabla V|\) is bounded
	below, so the diffusion becomes uniformly elliptic on compact subsets of
	\(\Gamma\setminus\{x_\ast\}\), giving the recurrence needed for asymptotic stability in the large.
	Thus, we can state the following result.

	\begin{theorem}
		Let $\Gamma \subset \mathbb{R}^d$ be a bounded connected open set with smooth boundary, and let $V \in C^\infty(\mathbb{R}^d)$ satisfy
		$ \nabla^2 V(x) \geq \mu I $
		for some $\mu>0$ and all $x \in \mathbb{R}^d$. Assume that \(V\) has a unique minimizer
		\(x_\ast \in \Gamma\). Then the minimizer $x_\ast$ is asymptotically stable
		in probability in the large for the reflected diffusion \eqref{eq:sde_eucl} with the choice of controls,
		\[
		a_i(x)=\sigma|\nabla V(x)|^2,\qquad
		u_i(x)=-\alpha\partial_{x_i}V(x),
		\]
		for $i=1,\ldots,d$, where $\alpha>0$ and $\sigma>0$ are constants.
	\end{theorem}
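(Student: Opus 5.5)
The plan is to repeat the two-step Khasminskii argument from the proof of the first theorem, with $W(x)=V(x)-V(x_\ast)$ as the local Lyapunov function. After translating coordinates we may take $x_\ast=0$.

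\textbf{Step 1: local stability.} I will compute the generator exactly. With $a_i=\sigma|\nabla V|^2$ we have $\partial_{x_i}a_i=2\sigma(\nabla^2V\nabla V)_i$, so
\begin{align*}
LW&=-\alpha|\nabla V|^2+\frac{\sigma^2}{2}|\nabla V|^4\Delta V\\
&\quad+\sigma^2|\nabla V|^2\,\nabla V^\top\nabla^2V\nabla V .
\end{align*}
On a compact ball $\overline{B_\rho(0)}\subset\Gamma$, smoothness gives $|\Delta V|\le C$ and $|\nabla^2V|\le C$. Hence $LW\le-\alpha|\nabla V|^2+C'\sigma^2|\nabla V|^4$. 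Since $\nabla V(0)=0$ and $\nabla V$ is continuous, we can shrink $\rho$ so that $C'\sigma^2|\nabla V|^2\le\alpha/2$ there. This yields $LW\le-\frac{\alpha}{2}|\nabla V|^2$.

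Strong convexity $\nabla^2V\ge\mu I$ gives the global Polyak--Łojasiewicz inequality $|\nabla V|^2\ge 2\mu W$, so $LW\le-\alpha\mu W<0$ on $B_\rho\setminus\{0\}$. It also gives the quadratic sandwich $\frac{\mu}{2}|x|^2\le W(x)\le\frac{M}{2}|x|^2$ near $0$, with $M$ a local bound on $\nabla^2V$. Therefore $W$ is positive definite and has an infinitesimal upper limit. By \cite[Theorem~5.3]{khasminskii2011stochastic} and \cite[Remark~5.3]{khasminskii2011stochastic}, $x_\ast$ is uniformly stable in probability.

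\textbf{Step 2: recurrence.} Fix a closed ball $B_r\subset B_\rho$ and set $D_r=\Gamma\setminus B_r$. Strong convexity gives $|\nabla V(x)|\ge\mu|x|\ge\mu r$ on $\overline{D_r}$, so $a_i\ge\sigma\mu^2r^2>0$ there and $L$ is uniformly elliptic on $D_r$ with smooth coefficients. As in the proof of the first theorem, I will solve the mixed problem $Lu_r=-1$ in $D_r$, with $u_r=0$ on $\partial B_r$ and $\partial u_r/\partial n=0$ on $\partial\Gamma$. The Dirichlet and Neumann parts are separated, so the same regularity references give $u_r\in C^\infty(\overline{D_r})$. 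Then \cite[Theorem 3.1.3]{pilipenko2014introduction} identifies $u_r(x)=\mathbb{E}_x[\tau_r]<\infty$, which is positive recurrence of $B_r$.

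\textbf{Step 3: conclusion.} \cite[Theorem~5.7]{khasminskii2011stochastic} combines uniform stability in probability with positive recurrence of $B_r$ to give asymptotic stability in the large. Its proof uses only the strong Markov property, which holds for the reflected diffusion via the submartingale problem, as remarked in the first proof.

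The main obstacle is Step 1. Its constants must be uniform, and the neighborhood must depend only on $\alpha,\sigma$ and local bounds of $V$. A secondary point in Step 2 is that the mixed problem needs $\partial B_r$ smooth and disjoint from $\partial\Gamma$. Taking a ball with $\overline{B_r}\subset\Gamma$ ensures both, since $x_\ast$ is interior.
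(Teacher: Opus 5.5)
Your proposal is correct and follows essentially the same route as the paper. It uses the same generator computation for $W=V-V(x_\ast)$, absorbs the $O(|\nabla V|^4)$ terms near $x_\ast$ and applies the Polyak--Lojasiewicz inequality to get $LW\le -c_0W$, then uses uniform ellipticity away from $x_\ast$ to solve the mixed Dirichlet--Neumann problem for positive recurrence and concludes with Khasminskii's Theorem~5.7, the only difference being that you make the paper's qualitative claims quantitative via $|\nabla V|^2\ge 2\mu W$ and $|\nabla V(x)|\ge \mu|x-x_\ast|$.
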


		

	
	\section{Manifold}
	In this section, we consider the generalization to manifolds.	Let $(M, g)$ be a connected compact smooth Riemannian manifold without boundary 
	of dimension $d$ with metric $g: TM \times TM \rightarrow \mathbb{R}$, and let $\mathfrak{X}(M)$ denote the space of smooth 
	vector fields on $M$.
	
	Once again, let $(\Omega, \mathcal{F}, \{\mathcal{F}_t\}_{t\geq 0}, \mathbb{P})$ be a 
	filtered probability space satisfying the usual conditions, and let 
	$W(t) = (W^1(t),\ldots,W^b(t))$ be a standard $\mathbb{R}^m$-valued 
	$\{\mathcal{F}_t\}$-Brownian motion. Given vector fields 
	$A_1,\ldots,A_m \in \mathfrak{X}(M)$ and smooth coefficient functions 
	$u_i, a_i \in C^\infty(M)$, we consider the following Stratonovich SDE 
	on $M$:
	\begin{equation}
		\label{eq:sdeman}
		\begin{aligned}
			dX(t)&=\sum_{i=1}^m u_i(X(t))A_i(X(t))\,dt\\
			&\quad+\sum_{i=1}^m a_i(X(t))A_i(X(t))\circ dW^i(t),
		\end{aligned}
	\end{equation}
	with $X(0)=X_0\in M$.
	Solutions of such SDEs are known to exist and unique under our smoothness assumptions \cite{ikeda2014stochastic}. 
	The generator of this process $L$ acts on smooth functions on $M$, and is given by:
	$$L f(x) = u_i A_i f + \frac{1}{2} (a_i A_i)^2 f  .$$
	We will need the manifold analogue of stochastic stability.
	\begin{definition}[Stability in probability]
		An equilibrium $x_{eq} \in M$ of~\eqref{eq:sdeman} is said to be \emph{stable in probability} for $t \geq 0$ if for any $s \geq 0$ and $\varepsilon > 0$,
		\[
		\lim_{x \to x_{eq}} \mathbb{P}\left\{ \sup_{t > s} d_g\bigl(X^{s,x}(t), x_{eq}\bigr) > \varepsilon \right\} = 0.
		\]
	\end{definition}
	
	\begin{definition}[Asymptotic stability in the large]
		An equilibrium $x_{eq} \in M$ of~\eqref{eq:sdeman} is said to be \emph{(asymptotically) stable in the large} if it is stable in probability and, in addition, for all $s \geq 0$ and $x \in M$,
		\[
		\mathbb{P}\left\{ \lim_{t \to \infty} d_g\bigl(X^{s,x}(t), x_{eq}\bigr) = 0 \right\} = 1.
		\]
	\end{definition}
	
	We will make the following assumptions on the process, which are a natural generalization of the setting considered in the last section.

	\begin{assumption}
		\label{asmp}
		The vector fields $\{A_1,\ldots,A_m\}$ satisfy
		$\mathrm{span}\{A_1(x),\ldots,A_m(x)\} = T_xM$
		for all $x \in M$, with possibly $m >d$.
	\end{assumption}
	
	The necessity of considering the possibility that $m > d$ is that, in general, a manifold might not be {\it globally parallizable}. For example, the two dimesnsional sphere $S^2$. On the other hand, if the manifold is a compact Lie group, then one can take $m = d$, and there exist a choice of orthonormal vector fields $\{A_1,\ldots,A_d\}$ that satisfy
	$\mathrm{span}\{A_1(x),\ldots,A_d(x)\} = T_xM$ for all $x \in M$.
	
	Given these definitions we have the following result.

	\begin{theorem}
		There exist smooth functions $a_i, u_i \in C^\infty(M)$ such that the
		equilibrium point $x_{eq}$ of the process \eqref{eq:sdeman} is
		asymptotically stable in probability in the large.
	\end{theorem}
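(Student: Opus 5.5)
We will mimic the Euclidean construction via the ``general potential'' extension, since on a manifold there are no global coordinates to exploit. The first step is to fix a smooth function $V\in C^\infty(M)$ with $V(x_{eq})=0$, $V>0$ on $M\setminus\{x_{eq}\}$, and with positive definite Hessian at $x_{eq}$. To build it, we take normal coordinates in a geodesic ball $B_\rho(x_{eq})$ of radius below the injectivity radius. We set $V=\frac12 d_g(x,x_{eq})^2\,\chi + (1-\chi)$, where $\chi$ is a smooth cutoff equal to $1$ on $B_{\rho/2}$ and supported in $B_\rho$. This $V$ is smooth, positive away from $x_{eq}$, and has a nondegenerate minimum at $x_{eq}$. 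We then choose
\[
a_i(x)=2V(x),\qquad u_i(x)=-\alpha\,(A_iV)(x),\qquad i=1,\dots,m .
\]

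Next comes the local Lyapunov estimate. Using $Lf=\sum_i u_iA_if+\frac12\sum_i a_iA_i(a_iA_if)$, we compute
\[
LV=-\alpha\sum_i (A_iV)^2+2V\sum_i (A_iV)^2+2V^2\sum_i A_i^2V .
\]
By Assumption~\ref{asmp} and compactness, the quadratic form $\xi\mapsto\sum_i \langle A_i(x),\xi\rangle^2$ on $T^*_xM$ is uniformly positive definite. Hence there are $c,C>0$ with $c|dV|_g^2\le\sum_i(A_iV)^2\le C|dV|_g^2$. Near $x_{eq}$, the nondegenerate Hessian gives $|dV|_g^2\ge c'V$ (PL inequality in normal coordinates), and $|\sum_iA_i^2V|$ is bounded. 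Therefore $LV\le-\alpha c c'V+C_1V^2$ on a neighborhood $U$. Shrinking $U$ yields $LV\le -c_0V<0$ on $U\setminus\{x_{eq}\}$. Working in the chart, where $V$ is comparable to the squared Euclidean distance, \cite[Theorem~5.3]{khasminskii2011stochastic} gives stability in probability. This stability is uniform by \cite[Remark~5.3]{khasminskii2011stochastic}, since $V$ has an infinitesimal upper limit. Here $\alpha>0$ is arbitrary.

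Third, we establish recurrence. Take a small geodesic ball $B_r\subset U$ and set $D_r=M\setminus B_r$. On $\overline{D_r}$, $a_i=2V\ge\lambda>0$ by compactness. The principal symbol of $L$ is $\frac12\sum_i a_i^2\langle A_i,\xi\rangle^2\ge \frac{\lambda^2}{2}c|\xi|_g^2$, so $L$ is uniformly elliptic on $D_r$. We then solve the Dirichlet problem $Lw=-1$ in $D_r$, $w=0$ on $\partial B_r$. The domain $D_r$ is a compact manifold with smooth boundary, and $L$ has a zero-order-free structure, so the maximum principle gives uniqueness; existence and smoothness up to the boundary follow from standard elliptic theory via partition of unity \cite{gilbarg1983trudinger}. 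This step is in fact easier than the Euclidean case, since there is no Neumann part. Itô's formula (Dynkin) then gives $w(x)=\mathbb{E}_x[\tau_r]<\infty$ with $\sup w<\infty$, so $B_r$ is positively recurrent. Finally, \cite[Theorem~5.7]{khasminskii2011stochastic}, whose proof uses only the strong Markov property (valid for SDEs on compact manifolds \cite{ikeda2014stochastic}), yields asymptotic stability in the large.

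The main obstacle is transporting Khasminskii's results, which are stated for $\mathbb{R}^\ell$, to $M$. For the local part, we plan to handle this by localizing in a chart. One could also pass to a Whitney embedding $M\hookrightarrow\mathbb{R}^\ell$ and extend the coefficients smoothly, so that the process lives on the invariant set $M$. The global part needs only the strong Markov property and finiteness of hitting times. A secondary technical point is ensuring the normal-coordinate estimates $c'V\le|dV|^2$ hold uniformly on $U$, which follows from the Morse lemma at $x_{eq}$.
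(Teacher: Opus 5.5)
Your proposal is correct and follows essentially the same route as the paper. It uses the same controls $a_i=2V$, $u_i=-\alpha A_iV$, the same local estimate $LV\le -c_0V$ near $x_{eq}$ feeding Khasminskii's Theorem~5.3, and the same uniform-ellipticity, Dirichlet-problem and Dynkin argument for positive recurrence of small balls before invoking Khasminskii's Theorem~5.7. Your explicit construction of $V$ from the squared geodesic distance (for which normal coordinates already give $V=\frac12|x|^2$ and $|dV|_g^2=2V$, so the Morse lemma is not needed) and your remark on localizing Khasminskii's $\mathbb{R}^\ell$ results are refinements of detail rather than a different approach.
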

	
	\begin{proof}
		There exists a smooth function $V \in C^\infty(M)$ such that
		\[
		V(x) \ge 0, \qquad V^{-1}(0)=\{x_{eq}\},
		\]
		and the Hessian of $V$ is positive definite at $x_{\rm eq}$.
		By the Morse lemma \cite[Lemma 2.2]{milnor1963morse}, there exist local coordinates
		$(x_1,\ldots,x_d)$ centered at $x_{eq}$, defined on a neighborhood $U$ of
		$x_{eq}$, such that $
		V(x)=\frac{1}{2}\sum_{j=1}^d x_j^2$
		on $U$. Write
		\[
		A_i=\sum_{j=1}^d c_i^j(x)\,\partial_{x_j},
		\qquad i=1,\ldots,m.
		\]
		Choose
		\[
		a_i(x)=2V(x), \qquad
		u_i(x)=-\alpha\,A_iV(x),
		\qquad i=1,\dots,m,
		\]
		for all $x\in M$.
		
		Here $\alpha>0$ is to be
		determined. Since $dV(x_{eq})=0$ and $V(x_{eq})=0$, we have
		\[
		u_i(x_{eq})=0, \qquad a_i(x_{eq})=0, \qquad i=1,\ldots,m,
		\]
		so $x_{eq}$ is an equilibrium of \eqref{eq:sdeman}.
		
		On $U$ we have
		\[
		A_i a_i= 2A_iV,
		\qquad i=1,\ldots,m.
		\]
		Substituting into
		\[
		LV = \sum_{i=1}^{m} u_i\,A_iV
		+ \frac{1}{2}\sum_{i=1}^{m}
		\left[a_i^2\,A_i^2V + a_i(A_i a_i)(A_iV)\right]
		\]
		gives
		\begin{align*}
			LV&=\sum_{i=1}^m(-\alpha A_iV)(A_iV)
			+\frac12\sum_{i=1}^m(2V(x))^2 A_i^2V\\
			&\quad+\frac12\sum_{i=1}^m2V(x)(2A_iV)(A_iV)\\
			&=-\alpha\sum_{i=1}^m(A_iV)^2+2V(x)\sum_{i=1}^m(A_iV)^2\\
			&\quad+\frac12(2V(x))^2\sum_{i=1}^m A_i^2V.
		\end{align*}
		
		Let $x=(x_1,\ldots,x_d)^\top$ and
		\begin{align*}
			c_i(x)&:=\bigl(c_i^1(x),\ldots,c_i^d(x)\bigr)^\top\in\mathbb R^d,\\
			G(x)&:=\sum_{i=1}^m c_i(x)c_i(x)^\top.
		\end{align*}
		Since
		\[
		A_iV = \sum_{j=1}^d c_i^j(x)\,x_j = c_i(x)\cdot x,
		\]
		we obtain
		\[
		\sum_{i=1}^m (A_iV)^2 = x^\top G(x)x.
		\]
		By assumption (ii), the vectors $\{A_1(x),\ldots,A_m(x)\}$ span $T_xM$ for
		every $x\in U$, hence $G(x)$ is positive definite for every $x\in U$.
		After shrinking $U$ if necessary, there exist constants $\lambda,\Lambda>0$
		such that
		\[
		\lambda |x|^2 \le x^\top G(x)x \le \Lambda |x|^2
		\qquad \text{for all } x\in U.
		\]
		Since $V(x)=\frac12|x|^2$ on $U$, this implies
		\[
		2\lambda V(x) \le \sum_{i=1}^m (A_iV)^2 \le 2\Lambda V(x)
		\qquad \text{for all } x\in U.
		\]
		Moreover, since $\sum_{i=1}^m A_i^2V$ is smooth on $\overline{U}$, there
		exists a constant $C>0$ such that $
		\left|\sum_{i=1}^m A_i^2V\right| \le C
		$ on  $U.$
		Therefore,
		\begin{align*}
			LV
			&\le -2\alpha\lambda V + 4V(x)\Lambda V + 2V(x)^2 C \\
			&= -2\alpha\lambda V + C_1 V^2,
		\end{align*}
		where $C_1:=4\Lambda+2C$.
		
		Therefore, if $U$ is chosen sufficiently small so that
		\[
		V(x)\le \frac{\alpha\lambda}{C_1}
		\qquad \text{for all } x\in U,
		\]
		then on $U\setminus\{x_{eq}\}$ we have
		\[
		LV \le -\alpha\lambda V < 0.
		\]
		By~\cite[Theorem~5.3]{khasminskii2011stochastic}, this implies that
		$x_{eq}$ is stable in probability. Since $V$ has an infinitesimal upper
		limit, stability in probability is in fact uniform
		(\cite[Remark~5.3]{khasminskii2011stochastic}).
		
		\medskip
		Next we show positive recurrence. Let $W$ be a neighborhood of $x_{eq}$ with
		$W \subset U$, and let $
		D = M \setminus W.$
		The second-order part of $L$ has principal symbol
		\[
		\sigma_x(\xi,\xi)
		=
		\frac12 \sum_{i=1}^m a_i(x)^2 \bigl(g(A_i(x),\xi)\bigr)^2,
		\qquad \xi \in T_xM.
		\]
		
		Since $V^{-1}(0)=\{x_{eq}\}$ and $x_{eq}\notin D$, the function $V$ is strictly
		positive on the compact set $D$. Therefore there exists $\eta>0$ such that
		\[
		V(x)\ge \eta
		\qquad \text{for all } x\in D.
		\]
		If we choose $a_i(x)=2V(x)$ for all $x\in M$, then
		\[
		a_i(x)\ge 2\eta
		\qquad \text{for all } x\in D,\ i=1,\dots,m.
		\]
		
		Consider the function $\sum_{i=1}^m \bigl(g(A_i(x),\xi)\bigr)^2$.
		This is strictly positive away from the equilibrium because
		\[
		\mathrm{span}\{A_1(x),\dots,A_m(x)\}=T_xM
		\qquad \text{for all }x\in D.
		\]
		Hence there exists $\nu>0$ such that
		\[
		\sum_{i=1}^m \bigl(g(A_i(x),\xi)\bigr)^2 \ge \nu |\xi|^2
		\qquad \text{for all }x\in D,\ \xi\in T_xM.
		\]
		Therefore,
		\[
		\sigma_x(\xi,\xi)
		\ge
		\frac{(2\eta)^2}{2}\sum_{i=1}^m \bigl(g(A_i(x),\xi)\bigr)^2
		\ge
		2\eta^2 \nu |\xi|^2.
		\]
		Thus $L$ is uniformly elliptic on all of $D$. By classical elliptic theory, there
		exists a unique bounded solution $u\in C^\infty(\overline{D})$ to the
		Dirichlet boundary value problem
		\[
		\begin{cases}
			Lu(x) = -1, & x\in D, \\[0.4em]
			u(x) = 0,   & x\in \partial W.
		\end{cases}
		\]
		Existence and uniqueness of a weak solution follow by the same
		arguments as in the Euclidean case; see
		\cite[Theorems~1.3 and~1.13]{grigoryan_elliptic_notes}.
		The argument
		extends to the compact Riemannian domain $D$ with smooth boundary in a straightforward manner.	Together with the Sobolev and compact embedding results on manifolds
		\cite[Section~2.10]{aubin2012nonlinear}, these yield solvability.
		Since the data are smooth, standard
		interior and Dirichlet boundary regularity in local charts imply
		$u\in C^\infty(\overline D)$; see~\cite{agranovich2015sobolev}.
		The maximum principle further implies $u\ge0$. Applying Dynkin's formula
		\cite[Theorem~7.4.1]{oksendal2003stochastic} formally to $\tau_W$ gives
		$u(x)=\mathbb{E}_x[\tau_W]\le \|u\|_\infty<\infty$. However, Dynkin's
		formula assumes $\mathbb{E}_x[\tau_W]$ is finite. The argument can be made
		rigorous by applying Dynkin's formula to the bounded stopping time
		$\tau_W\wedge t$ and passing to the limit $t\to\infty$ via monotone
		convergence. Particularly, define the first hitting time of \(W\) by $
		\tau_W:=\inf\{t\ge 0: X_t\in W\},$
		and for \(t>0\) set $
		\sigma_t:=\tau_W\wedge t.$
		Then \(\sigma_t\) is a bounded stopping time, so Dynkin's formula applies:
		\[
		\mathbb E_x[u(X_{\sigma_t})]
		=
		u(x)+\mathbb E_x\int_0^{\sigma_t} Lu(X_s)\,ds.
		\]
		Since \(Lu=-1\) on \(D\), this becomes
		$
		\mathbb E_x[u(X_{\sigma_t})]
		=
		u(x)-\mathbb E_x[\sigma_t],
		$
		or equivalently,$
		\mathbb E_x[\sigma_t]
		=
		u(x)-\mathbb E_x[u(X_{\sigma_t})].
		$
		Because \(u\ge 0\) on \(D\), it follows that $
		\mathbb E_x[\sigma_t]\le u(x).$
		On the other hand, \(\sigma_t=\tau_W\wedge t\) is increasing in \(t\) and
		\[
		\sigma_t\uparrow \tau_W
		\qquad \text{as } t\to\infty.
		\]
		Therefore, by the monotone convergence theorem,
		\[
		\mathbb E_x[\tau_W]
		=
		\lim_{t\to\infty}\mathbb E_x[\tau_W\wedge t]
		\le u(x)<\infty.
		\]
		Hence the hitting time \(\tau_W\) has finite expectation, so \(W\) is positive recurrent.
		
		Combining uniform stability in probability with the positive recurrence of
		$W$ established above, \cite[Theorem~5.7]{khasminskii2011stochastic}
		yields that $x_{eq}$ is asymptotically stable in probability in the large.
	\end{proof}
	
	\textbf{The Partially Underactuated Case}
	
	Assumption \ref{asmp} can be relaxed in the following way allowing one to apply the result with $m=d$ even if the manifold is not parallelizable. Instead of the global assumption, we require only that the vector fields \(\{A_1,\ldots,A_m\}\) satisfy
	\[
	\operatorname{span}\{A_1(x),\ldots,A_m(x)\}=T_xM
	\]
	for all $x$ in a neighborhood of $x_{eq}$,
	so in particular \(m=d\) there, while globally the family \(\{A_1,\ldots,A_m\}\) is assumed to satisfy the \textbf{Lie bracket generating condition}. That is, if \({\rm Lie}\{A_1,\ldots,A_m\}\) denotes the Lie algebra generated by \(A_1,\ldots,A_m\), then
	\[
	\operatorname{span}\{Y(x):Y\in{\rm Lie}\{A_1,\ldots,A_m\}\}=T_xM
	\]
	for all $x\in M$.
	This preserves the local stability argument near \(x_{eq}\). Global recurrence may then be recovered from the bracket-generating condition, through existence of smooth solutions that satisfy the bracket generating condition \cite{bony1969principe}, provided one can choose the neighborhood of the target equilibrium such that it has a {\it noncharacteristic boundary condition}. The assumption of full rank near the equilibrium means we can find a neighborhood of such that the boundary of the neighborhood is noncharacteristic trivially. 
	
	As a basic example to motivate the partially underactuated setting, take \(M=S^2\) and let
	\[
	\Phi:SO(3)\times S^2\to S^2,\qquad \Phi(g,x)=g\cdot x,
	\]
	be the standard action, with $SO(3)$ being the special orthogonal group of matrices. For \(X\in\mathfrak{so}(3)\), we can identify it with \(X\) the associated  vector field
	\[
	X(x):=\left.\frac{d}{dt}\right|_{t=0}\exp(tX)\cdot x.
	\]
	Choose \(X_1,X_2\in\mathfrak{so}(3)\) such that \(X_1,X_2,[X_1,X_2]\) span \(\mathfrak{so}(3)\), and set
	\[
	A_1:=\tilde{X}_1,\qquad A_2:=\tilde{X}_2.
	\]
	We know that 
	\[
	[A_1,A_2]=\tilde{Y}, \qquad Y = [X_1,X_2].
	\]
	Moreover, for each \(x\in S^2\), the differential of the orbit map
	\[
	\Phi_x:SO(3)\to S^2,\qquad \Phi_x(g)=g\cdot x,
	\]
	at the identity is the surjective map
	\[
	d(\Phi_x)_e:\mathfrak{so}(3)\to T_xS^2,\qquad X\mapsto X(x).
	\]
	It follows that
	\[
	\operatorname{span}\{A_1(x),A_2(x),[A_1,A_2](x)\}=T_xS^2
	\]
	for all $x\in S^2$.
	Hence \(\{A_1,A_2\}\) is bracket generating on \(S^2\). Moreover, $X_1$, $X_2$ can be chosen such that 	$
	\operatorname{span}\{A_1(x),A_2(x)\}=T_xS^2$ for all $x$ near the target equilibrium $x_{\rm eq}$.

	The argument that we have provided for the partially underactuated case is only a sketch. A rigorous proof would require extending results on the solvability and boundary regularity of the Dirichlet problem for hypoelliptic operators to the present manifold setting and establishing the corresponding recurrence property. We leave this analysis to future work.
	
	\begin{figure*}[!t]
		\centering   
		\includegraphics[width=0.85\textwidth]{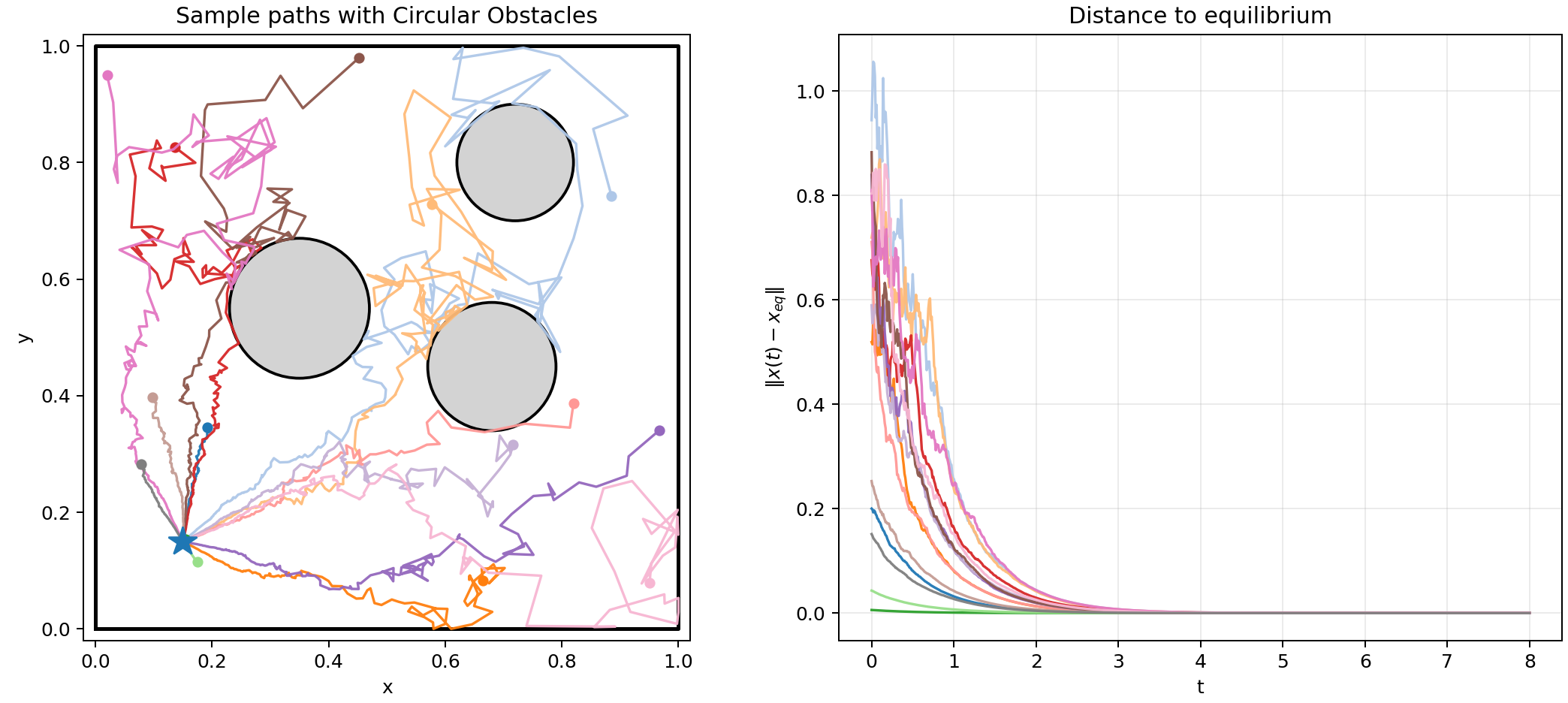}   
		\caption{Sample paths of reflected diffusion with different initial conditions, in the obstacle domain and the corresponding distance-to-equilibrium trajectories.}
		\label{fig:reflected_sde}
	\end{figure*}
	
	\begin{figure*}[!t]
		\centering   
		\includegraphics[width=0.85\textwidth]{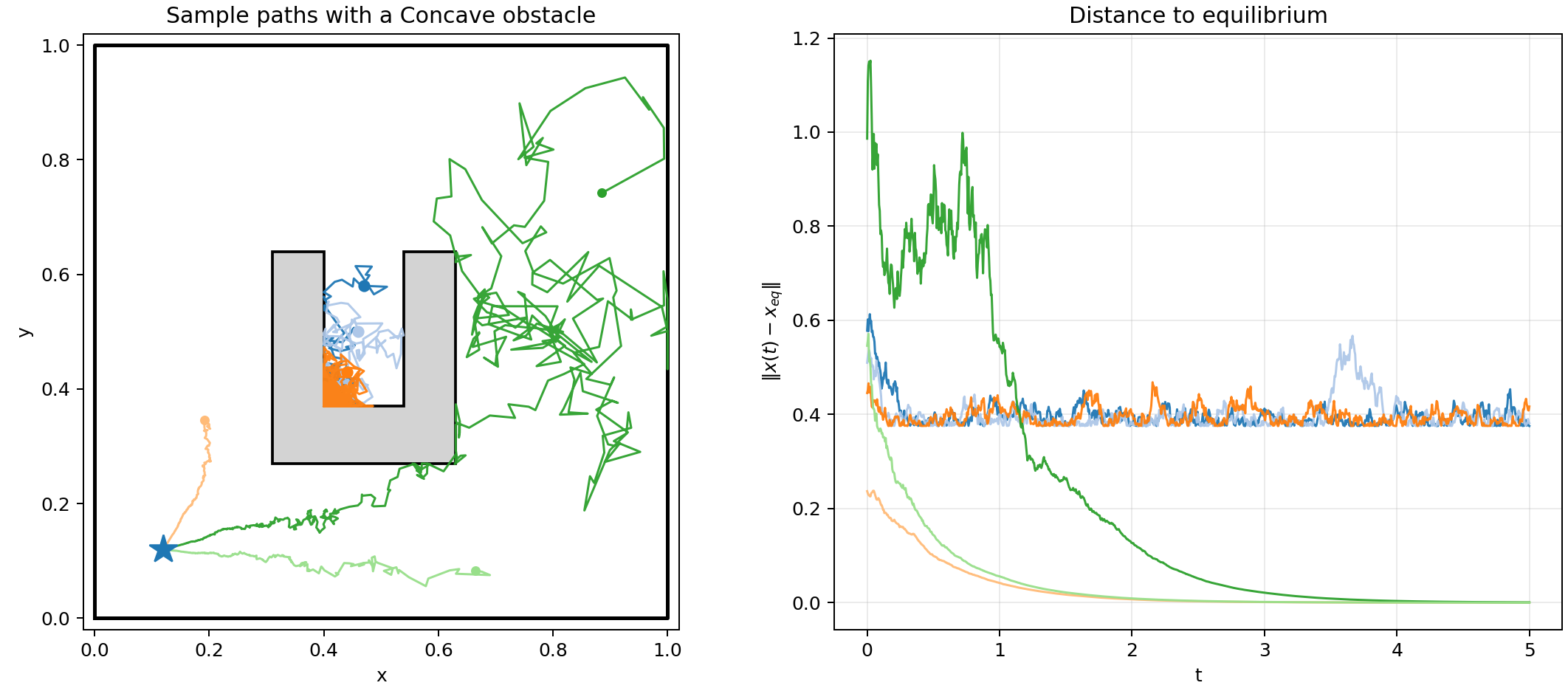}   
		\caption{Sample paths of reflected diffusion with different initial conditions, with a non-convex obstacle and corresponding distance-to-equilibrium trajectories.}
		\label{fig:reflected_sde2}
	\end{figure*}
	
	\begin{figure*}[!t]
		\centering       \includegraphics[width=0.85\textwidth]{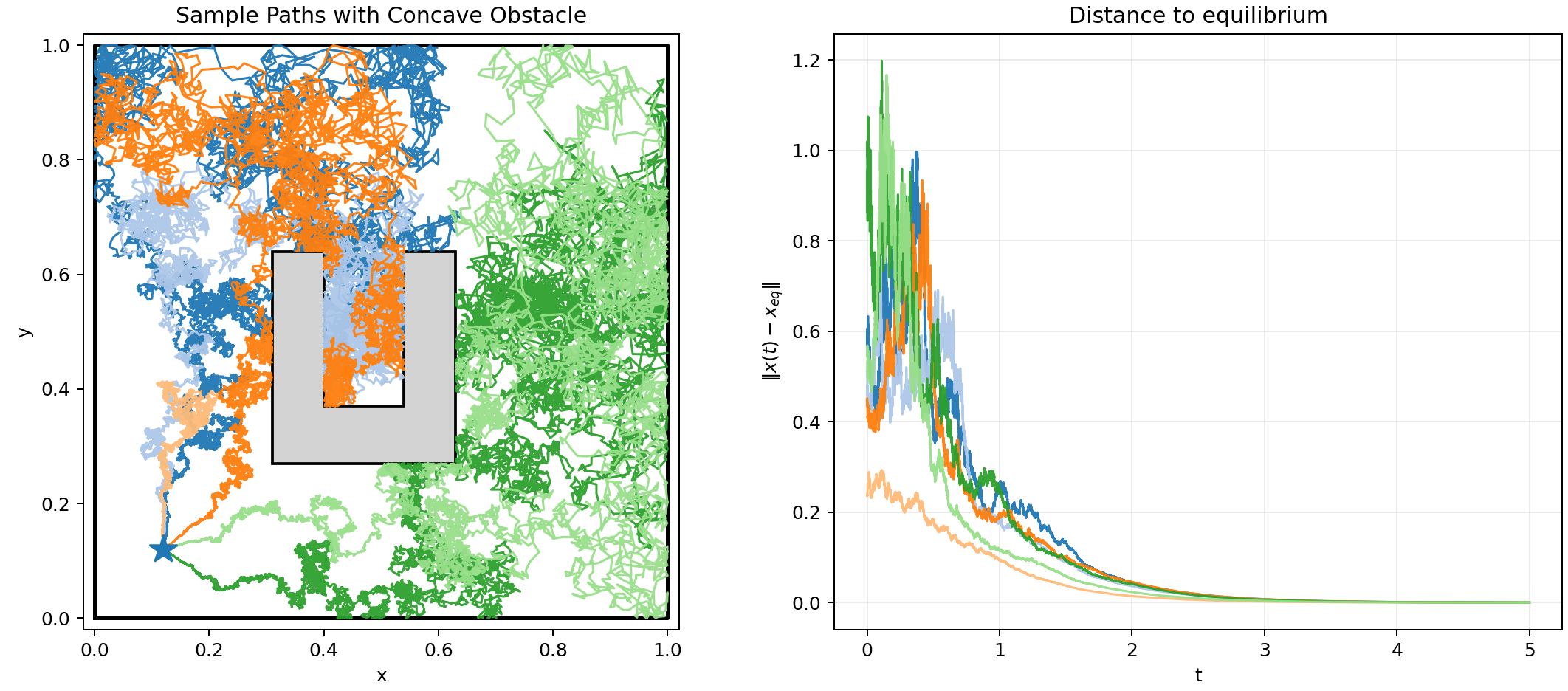}  
		\caption{Sample paths of reflected diffusion with different initial conditions, with a non-convex obstacle and corresponding distance-to-equilibrium trajectories. The amount of noise has been increased to get out of the meta stable region.}
		\label{fig:reflected_sde3}
	\end{figure*}
	
	\begin{figure*}[!t]
		\centering   
		\includegraphics[width=0.90\textwidth]{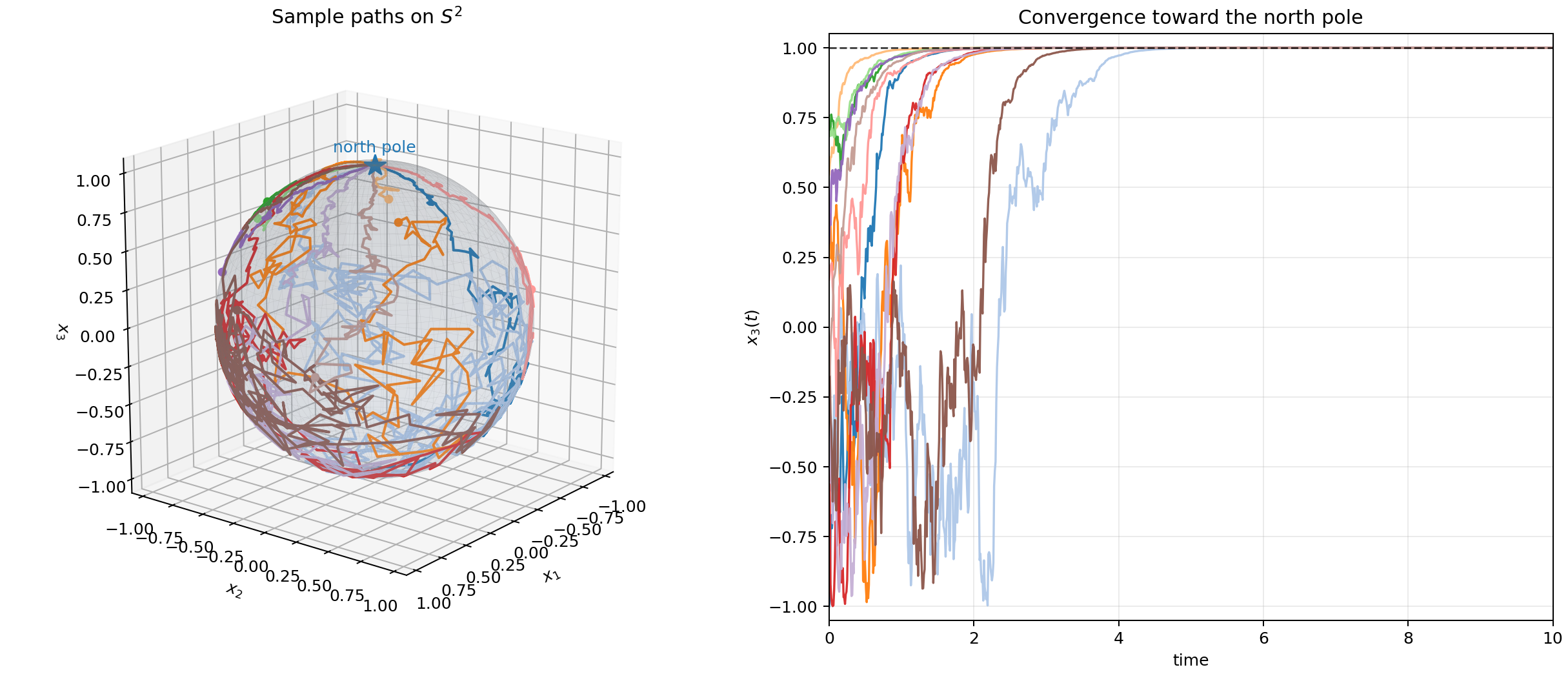}   
		\caption{Sample paths of  diffusion on the sphere, with different initial conditions and the corresponding coordinate $x_3$.}
		\label{fig:sphere}
	\end{figure*}
	
	\section{Numerical Experiments}

	\subsection{Stabilization in a Domain with Obstacles}
	In this section we describe numerical experiments used to illustrate the performance of the reflected diffusion based feedback control approach. First, we consider a bounded planar domain with circular
	obstacles. The free space is taken to be $
	\Gamma = [0,1]^2 \setminus \bigcup_{j=1}^3 \mathcal{O}_j,$
	where $\mathcal{O}_j$ are three circular obstacles, and the target
	equilibrium is chosen as
	\[
	x_{eq} = (0.15,0.15).
	\]
	
	The simulation is based on the choice
	\[
	u_i(x)=-\alpha(x_i-x_{eq,i}),\qquad
	a_i(x)=\sigma|x-x_{eq}|^2,
	\]
	for $i=1,2$,
	where $\alpha>0$ is the drift gain and $\sigma>0$ is a noise-scale
	parameter. In vector form, the dynamics are
	\begin{align*}
		dX(t)&=-\alpha(X(t)-x_{eq})\,dt\\
		&\quad+\sigma\|X(t)-x_{eq}\|^2\circ dW(t)\\
		&\quad+n(X(t))\,d\phi(t),
	\end{align*}
	with reflection enforcing the state constraint $X(t)\in \Gamma$. The results of the experiment can be seen in Figure \ref{fig:reflected_sde}.

	We perform an additional experiment in which the obstacles are concave. This demonstrates the limitations of the stabilizing laws if not chosen appropriately. Though, in theory we know that the target equilibrium is attractive, the presence of non-convex obstacles can induce regions of {\it metastability} in which the the system can spend a long amount of time, before escaping to regions around the equilibrium. This is clearly demonstrated in Figure \ref{fig:reflected_sde2}, where samples initialized at some initial conditions are trapped in metastable regions. The issue of metastability can be reduced to some degree if one increases the amount of noise. This can be seen in Figure \ref{fig:reflected_sde3}

	\subsection{Stabilization on the Sphere}

	In this section we describe the numerical experiment used to illustrate the noise based scheme on a
	manifold - the sphere $S^2 \subset \mathbb{R}^3$ with target
	equilibrium at the north pole $
	x_{eq} = e_3 = (0,0,1).$
	The simulation is based on the coordinate-free choice
	\[
	u_i(x) = -\alpha\,A_iV(x),
	\qquad
	a_i(x)=2V(x),
	\]
	where $V$ is a smooth Lyapunov function with a strict minimum at $x_{eq}$.
	
	We take
	\[
	V(x) = 1-x_3,
	\qquad x=(x_1,x_2,x_3)\in S^2,
	\]
	so that $V\ge 0$ on $S^2$ and $V(x)=0$ if and only if $x=e_3$.
	For the spanning family of tangent vector fields, we take the coordinate vector-fields $(e_1,...e,_3)$ of $\mathbb{R}^d$ and project it onto the tangent space of the sphere to define the vector field
	\[
	A_i(x) = P_x e_i = (I-xx^\top)e_i,
	\qquad i=1,2,3,
	\]
	where $P_x = I-xx^\top$ denotes the orthogonal projection onto the tangent
	space $T_xS^2$.

	These vector fields are smooth, tangent to $S^2$, and satisfy
	\[
	\mathrm{span}\{A_1(x),A_2(x),A_3(x)\} = T_xS^2
	\qquad \text{for all } x\in S^2.
	\]
	The control coefficients are then chosen as
	\[
	u_i(x) = -\alpha\,A_iV(x),
	\qquad
	a_i(x) = 2\sigma V(x),
	\qquad i=1,2,3,
	\]
	where $\alpha>0$ is the drift gain and $\sigma>0$ is a noise-scale parameter.

	Since $\nabla_{\mathbb{R}^3}V = -e_3$, where $\nabla_{\mathbb{R}^3}V $ denotes the Euclidean gradient,  we have
	\[
	A_iV(x) = \nabla_{\mathbb{R}^3}V(x)\cdot A_i(x)
	= -\delta_{i3} + x_i x_3.
	\]
	Hence
	\begin{align*}
		u_1(x)&=-\alpha x_1x_3,\qquad u_2(x)=-\alpha x_2x_3,\\
		u_3(x)&=\alpha(1-x_3^2),
	\end{align*}
	and
	\[
	a_1(x)=a_2(x)=a_3(x)=2\sigma(1-x_3).
	\]
	
	With this choice, the deterministic drift simplifies to
	\[
	\sum_{i=1}^3 u_i(x)A_i(x)
	=
	\alpha(e_3-x_3x),
	\]
	Moreover, the diffusion amplitude vanishes at the north pole since
	\[
	a_i(e_3)=0,
	\qquad i=1,2,3.
	\]

	\section{Conclusion}
	We have shown that, by introducing noise and enforcing domain invariance via reflection, a target equilibrium on a compact Euclidean domain with smooth boundary or on a compact Riemannian manifold can be smoothly stabilized in the sense of stochastic asymptotic stability in the large. The construction circumvents the topological obstructions that preclude smooth deterministic global stabilization, at the cost of adopting a probabilistic notion of convergence. However, as pointed out through numerical experiments, we also advise caution regarding this result. While we have proved a kind of asymptotic stability, the control laws do not necessarily completely deal with the issue of non-convex obstacles due to the issues of metastability. One potential direction of research to address this issue is to minimize the maximum expected time required to visit a neighborhood of the target equilibrium using the related Dirichlet problem, and optimizing for the control laws.
	
	Another natural direction for future work is to extend this strategy to systems with non-holonomic constraints, attempting to strengthen the directions of \cite{nishimura2016stabilization}. While one can still prove recurrence under the bracket generating assumption, the local stability property is harder to attain. Stochastic stabilization of such systems using much weaker notion of stability (in the sense of distributions) has been achieved in \cite{elamvazhuthi2023density}.
	
	\section*{Acknowledgment}
	The author is thankful to Jorge Poveda for helpful comments and suggestions regarding the manuscript.
	
	\section{AI use}
	
	AI was used for developing the code for the simulations and polishing the text. The mathematical discussion and proofs in the paper are due to the author.

	\bibliographystyle{IEEEtran}
	\bibliography{ref}

\end{document}